\documentclass[12pt,psamsfonts]{amsart}
\usepackage{amsmath,amsthm,amsfonts,amssymb,pstricks}

\addtolength{\evensidemargin}{-.4in}
\addtolength{\oddsidemargin}{-.4in} \addtolength{\textwidth}{.8in}
\usepackage{eucal}
\newcommand{\1}{{1 \hspace{-0.35em} {\rm 1}}}

\newcommand{\tr}{{\rm tr}}

 \DeclareMathOperator{\Sp}{Sp}
\DeclareMathOperator{\End}{End} 
\DeclareMathOperator{\Hom}{Hom} \DeclareMathOperator{\im}{im}
\DeclareMathOperator{\Irr}{Irr} 
 \DeclareMathOperator{\Id}{Id}
 \DeclareMathOperator{\diag}{diag}\DeclareMathOperator{\Tr}{Tr}

\newcommand{\C}{\mathbb C}


\newtheorem{thm}{Theorem}[subsection]

\theoremstyle{definition}
\newtheorem{defn}[thm]{Definition}
\theoremstyle{remark}
\newtheorem{rmk}[thm]{Remark}

\begin{document}
\title[From ribbon categories to gYB-operators and link invariants]
{From ribbon categories to generalized Yang-Baxter operators and link invariants (after Kitaev and Wang)}

\author{Seung-moon Hong}
\email{seungmoon.hong@utoledo.edu}
\address{Department of Mathematics and Statistics\\
    The University of Toledo \\
    Toledo, OH 43606\\
    U.S.A.}

\thanks{MSC2010 numbers: Primary 57M25, 20F36; Secondary 81R50. We thank Zhenghan Wang and Eric C. Rowell for their encouragement and useful discussions.}

\begin{abstract}
We consider two approaches to isotopy invariants of oriented links: one from ribbon categories and the other from generalized Yang-Baxter operators with appropriate enhancements. The generalized Yang-Baxter operators we consider are obtained from so-called gYBE objects following a procedure of Kitaev and Wang.  We show that the enhancement of these generalized Yang-Baxter operators is canonically related to the twist structure in ribbon categories from which the operators are produced.  If a generalized Yang-Baxter operator is obtained from a ribbon category, it is reasonable to expect that two approaches would result in the same invariant.  We prove that indeed the two link invariants are the same after normalizations. As examples, we study a new family of generalized Yang-Baxter operators which is obtained from the ribbon fusion categories $SO(N)_2$, where $N$ is an odd integer.  These operators are given by $8\times 8$ matrices with the parameter $N$ and the link invariants are specializations of the two-variable Kauffman polynomial invariant $F$.

\end{abstract}

\maketitle
\section{Introduction}

There are two two-variable generalizations of the Jones polynomial: the HOMFLY-PT polynomial $P$ and the Kauffman polynomial $F$.  Turaev constructed these polynomial invariants of links $P$ and $F$ using Yang-Baxter operators \cite{Tu}. Recently \emph{generalized} Yang-Baxter operators were proposed \cite{RZWG}. Since then only a few generalized Yang-Baxter operators have been discovered. Their braid group representations and applications to link invariants were studied in \cite{GHR, Ch, Ho}. We obtain explicitly another family of generalized Yang-Baxter operators from ribbon categories using the method in \cite{KW} and study their corresponding link invariants.

Each object in a ribbon category gives rise to an isotopy invariant of \emph{framed} oriented links (or oriented ribbon graphs). Twist structure in ribbon categories is an obstruction to the construction of an isotopy invariant of (non-framed) oriented links.  Usually one needs to introduce a normalization factor to compensate for the twist structure. Yang-Baxter operators have a similar obstruction and thus need to be enhanced to produce a link invariant. The enhancement of Yang-Baxter operators was introduced in \cite{Tu} and is essentially amount to a normalization to cancel out the twist effect (it corresponds to the second Markov move $\xi \rightarrow \xi \sigma^{\pm 1}_n$ for braids $\xi \in B_n$). This enhancement method is extended to generalized Yang-Baxter operators \cite{Ho}. If we obtain a generalized Yang-Baxter operator from a ribbon category and define an invariant of oriented links from the operator via such an enhancement, it is reasonable to expect that the resulting link invariant is related to the one from the ribbon category.

We demonstrate a method of constructing generalized Yang-Baxter operators from certain objects in ribbon fusion categories \cite{KW}. We show that one can enhance the operators obtained in this way using the twist data in the ribbon categories which we start with. This enhancement is canonical in the sense that it is always possible to enhance the operators and the resulting link invariant is the same as the one directly obtained from the category. As an example we construct a family of $(2, 3, 1)$-generalized Yang-Baxter operators from the unitary modular categories $SO(N)_2$ and define isotopy invariants of oriented links via the enhancement method. Furthermore we show that the invariants are specializations of the Kauffman polynomial invariant $F$.

The contents of this paper are as follows. In Section \ref{section-pre} we recall some prerequisite materials. We list a few necessary definitions on generalized Yang-Baxter operators and their corresponding link invariants. Also we present a convenient choice of trivalent basis morphisms in ribbon categories. Such a choice of trivalent basis makes it easy to calculate graphical expressions which appear in later sections. In Section \ref{section-gYB} we demonstrate how to obtain generalized Yang-Baxter operators from certain objects in ribbon fusion categories following \cite{KW}. These operators come with braid group representations in a natural way.  Based on such a representation one obtains link invariants essentially by considering traces of the images under the representations. In Section \ref{section-inv} we define invariants of oriented links associated with the generalized Yang-Baxter operators and show the equivalence of the resulting invariants with those coming directly from the categories. In Section \ref{section-examples} we present a new family of generalized Yang-Baxter operators which is obtained from the categories $SO(N)_2$. We show that the invariants are specializations of the Kauffman polynomial invariant $F$.

\section{Preliminaries}\label{section-pre}

\subsection{Generalized Yang-Baxter operators and link invariants}

Most of the material in this subsection can be found in \cite{Ho}.

\begin{defn}
An isomorphism $R:V^{\otimes k} \rightarrow V^{\otimes k}$ is called a \textbf{generalized Yang-Baxter operator} (briefly, a gYB-operator) of type $(d,k,m)$ if it satisfies the following generalized Yang-Baxter equation and far-commutativity:

$$(R\otimes I_m)\circ(I_m \otimes R)\circ(R \otimes I_m)=(I_m \otimes R)\circ(R \otimes I_m)\circ(I_m \otimes R);$$
$$(R\otimes I^{\otimes (j-2)}_m)\circ(I^{\otimes (j-2)}_m\otimes R)=(I^{\otimes (j-2)}_m\otimes R)\circ(R\otimes I^{\otimes (j-2)}_m)\:\: \text{for}\:\: j \geq 4$$

\noindent where $d=\dim(V)$ and $I_m = \Id_{V^{\otimes m}}$.
\end{defn}

Note that a $(d,2,1)$ type gYB-operator is the ordinary YB-operator on $V$ of dimension $d$. In this case,  far-commutativity is automatic as $R$ acts on disjoint tensor factors. However it is not so in general.

Each gYB-operator $R$ gives rise to a representation of braid group $B_n \rightarrow \End(V^{\otimes k+m(n-2)})$ via $\sigma_i \mapsto R_i=I_m^{\otimes i-1}\otimes R \otimes I_m^{\otimes n-i-1}$. We denote this representation by $\rho^R_n$ and its image by $\im(\rho^R_n)$.

It is well known that any oriented link can be obtained by closing a braid and two braids produce isotopic links if and only if these braids are related by a finite sequence of Markov moves $\xi \mapsto \eta^{-1}\xi\eta, \xi \mapsto \xi\sigma_n^{\pm1}$ where $\xi, \eta \in B_n$. One may expect the following process as a way of constructing an invariant of oriented links: for each oriented link choose a braid whose closure is isotopic to the link, apply representation $\rho^R_n$, and then compute the trace of the image. However this process does not give us an invariant of oriented links because it does not respect the second Markov moves in general. This is why we need to enhance gYB-operators.

\begin{defn}\label{def-EgYB}
 An \textbf{enhanced generalized Yang-Baxter operator} (EgYB-operator) is a collection \{a gYB-operator $R:V^{\otimes k} \rightarrow V^{\otimes k}$, $\mu :V \rightarrow V $, invertible elements $\alpha, \beta$ of $\mathbb{C}$ \} which satisfies the following conditions for all $n$:

(i) The endomorphism $\mu^{\otimes k}  : V^{\otimes k} \rightarrow V^{\otimes k}$ commutes with $R$;

(ii) $\mu^{\otimes m(n-1)} \otimes (\Sp_{k,m}(R\circ \mu^{\otimes k})-\alpha\beta\mu^{\otimes k-m}) \in \im(\rho^R_n)^\bot$;\\ $\mu^{\otimes m(n-1)} \otimes (\Sp_{k,m}(R^{-1}\circ \mu^{\otimes k})-\alpha^{-1}\beta\mu^{\otimes k-m}) \in \im(\rho^R_n)^\bot$.

\end{defn}

\noindent Here $\Sp_{k,m}:\End(V^{\otimes k})\rightarrow \End(V^{\otimes k-m}), m < k,$ is the operator trace map which preserves trace. For example, if $f \in \End(V^{\otimes 3})$ then $\Sp_{3,1}(f)\in \End(V^{\otimes 2})$ is defined by $\Sp_{3,1}(f)(v_{i_1}\otimes v_{i_2})=\sum_{j}f^{j_1,j_2,j}_{i_1,i_2,j}v_{j_1}\otimes v_{j_2}$ using multiindex form. Orthogonality conditions (ii) are with respect to the trace inner product $\langle f,g\rangle=\tr(f^*\circ g)$ (see \cite{Ho} for details). For each EgYB-operator $S=\{R, \mu, \alpha, \beta \}$ we define an invariant of oriented links as follows:

$$T_S(L)=\alpha^{-w(D)}\beta^{-n} \tr(\rho^R_n(\xi)\circ \mu^{\otimes k+m(n-2)})$$

\noindent where $L$ is an oriented link, $D$ is a link diagram of $L$, $\xi \in B_n$ is a braid whose closure is isotopic to $L$, and $w(D)$ is the writhe.
It is known that the invariant is projectively multiplicative on disjoint union of links, $T_S(L_1\bigsqcup L_2)=\tr(\mu)^{2m-k} T_S(L_1)\cdot T_S(L_2)$, and thus invariant $\hat{T}_S := \tr(\mu)^{2m-k} T_S$ is multiplicative \cite{Ho}.

\subsection{Notations and Conventions for ribbon categories}


Let $\mathcal{C}$ be a ribbon fusion category and $\Irr (\mathcal{C})=\{X_i \}$ be a set of isomorphism classes of simple objects. $N^{k}_{i, j}$ denotes the multiplicity of $X_k$ in the tensor product decomposition of $X_i \otimes X_j$. If $N^{k}_{i, j}$ is nonzero, we may choose a basis $\{v_1, v_2, \ldots, v_{N^{k}_{i, j}} \}$ of $\Hom (X_i \otimes X_j,X_k)$ and dual basis $\{v'_1, v'_2, \ldots, v'_{N^{k}_{i, j}} \}$ of $\Hom (X_k, X_i \otimes X_j)$ so that the composite $v_l \circ v'_m$ is equal to zero if $l \neq m$ and nonzero multiple of $\Id_{X_k}$ if $l=m$. If $\mathcal{C}$ is pseudo-unitary, then all quantum dimensions $\dim(X_i)$ are positive and we may choose basis so that
$v_l \circ v'_m = \delta_{l,m}\frac{\sqrt{\dim(X_i) \dim(X_j)        }}{\sqrt{   \dim(X_k)    }} \Id_{X_k}$. Then these trivalent basis morphisms satisfy

 \begin{equation}\label{trivalent}
\begin{array}{c} \Id_{X_i \otimes X_j}= \sum \frac{\sqrt{   \dim(X_k)     }}{\sqrt{ \dim(X_i) \dim(X_j)  }}v'_l \circ v_l \\
\Tr(v_l \circ v'_l)=\sqrt{ \dim(X_i) \dim(X_j) \dim(X_k)         }
\end{array}
  \end{equation}

 \noindent where sum is taken over all simple objects $X_k$ with nonzero $N^{k}_{i, j}$, $l \in \{1, 2, \ldots,  N^{k}_{i, j}\}$, and $\Tr$ is Markov trace in the category. This choice is convenient for graphical calculus which we will use in later sections. Graphical expressions of these equalities are given in Figure \ref{basis}. For the case of non-unitary ribbon categories, we choose instead $\nu_i$ such that $\nu_i^2=\dim(X_i)$ and choose orthogonal basis in a similar way. However we will use the above square root notation as the main examples are pseudo-unitary.

\vspace{1cm}
\begin{figure}[h]\psset{unit=5mm}

$\begin{pspicture}(0,0)(2,4)
\psline(1,0)(1,1)
\psline(1,3)(1,4)
\psbezier(1,1)(0,1.8)(0,2.2)(1,3)
\psbezier(1,1)(2,1.8)(2,2.2)(1,3)
\put(1,0){\tiny $X_k$}
\put(1,3.5){\tiny$X_k$}
\put(-0.5,2){\tiny$X_i$}
\put(1.8,2){\tiny$X_i$}
\put(0.3,0.6){\tiny$v'_m$}
\put(0.4,3){\tiny$v_l$}
\end{pspicture}$
$\begin{array}{c} =\: \delta_{l,m}\frac{\sqrt{d_i d_j}}{\sqrt{d_k}}\\ \\ \\ \\  \end{array}$
$\begin{pspicture}(0,0)(2,4)
\psline(0,0)(0,4)
\put(0,0){\tiny$X_k$}
\put(1.2,0){,}
\end{pspicture}$
\hspace{0.1cm}$\begin{pspicture}(0,0)(1,4)
\psline(0,0)(0,4)
\psline(1,0)(1,4)
\put(0,0){\tiny$X_i$}
\put(1,0){\tiny$X_j$}
\end{pspicture}$
$\begin{array}{c} = \: \sum \frac{\sqrt{d_k}}{\sqrt{d_i d_j}} \\ \\ \\ \\  \end{array}$
\hspace{-0.3cm}$\begin{pspicture}(0,0)(2,4)
\psline(0,0)(1,1)
\psline(2,0)(1,1)
\psline(0,4)(1,3)
\psline(2,4)(1,3)
\psline(1,1)(1,3)
\put(0.3,0){\tiny$X_i$}
\put(2,0){\tiny$X_j$}
\put(0.2,3.8){\tiny$X_i$}
\put(2,3.8){\tiny$X_j$}
\put(1,2){\tiny$X_k$}
\put(0.3,1){\tiny$v_l$}
\put(0.3,2.6){\tiny$v'_l$}
\put(3.3,0){,}
\end{pspicture}$
\hspace{0.9cm}$\begin{pspicture}(0,0)(3,4)
\psline(3,1)(3,3)
\psbezier(1,3)(1,4)(3,4)(3,3)
\psbezier(1,1)(1,0)(3,0)(3,1)
\psbezier(1,1)(0,1.8)(0,2.2)(1,3)
\psbezier(1,1)(2,1.8)(2,2.2)(1,3)
\put(1,-0.2){\tiny$X_k$}
\put(1,3.8){\tiny$X_k$}
\put(-0.5,2){\tiny$X_i$}
\put(1.8,2){\tiny$X_i$}
\put(0.3,0.6){\tiny$v'_l$}
\put(0.4,3){\tiny$v_l$}
\end{pspicture}$
$\begin{array}{c} = \:\:\sqrt{d_i d_j d_k}\\ \\ \\ \\  \end{array}$

\caption{\small In the equalities $d_i$ denotes $\dim(X_i)$. Trivalent vertices correspond to basis morphisms in $\Hom$-spaces and diagrams should be read from bottom to top. Once we choose dual basis $v'$ of $v$ satisfying the first condition, the second and third equalities easily follow.}
\label{basis}
\end{figure}

For any objects $X, Y, Z, W$ in a ribbon category $\mathcal{C}$, associativity constraints are given by $F^{ X, Y, Z    }_{ W }: \Hom( W, (X\otimes Y)\otimes Z  ) \rightarrow \Hom( W, X \otimes (Y\otimes Z)  )$ subject to the pentagon axiom. Braidings are given by $c_{X, Y}: X\otimes Y \rightarrow Y \otimes X$ subject to the hexagon axiom. We denote by $R^{X,Y}_{Z}$ the eigenvalue of braiding $c_{X, Y}$ on a subobject $Z$ of $X \otimes Y$. Twist coefficient of an object $X$ is denoted by $\theta_X$. See Figure \ref{FRtheta} for graphical notations.

\vspace{1cm}
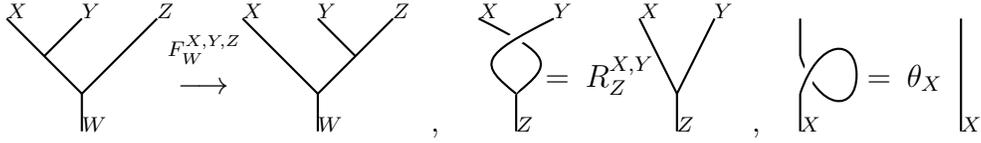
\begin{figure}[h]\psset{unit=5mm}

$\begin{pspicture}(0,0)(4,3)
\psline(2,0)(2,1)\psline(2,1)(0,3)\psline(1,2)(2,3)\psline(2,1)(4,3)
\put(2,0){\tiny $ W $}\put(0,3){\tiny $ X $}\put(2,3){\tiny $ Y $}\put(4,3){\tiny $ Z $}
\end{pspicture}$
\hspace{1cm}$\begin{pspicture}(0,0)(4,3)
\psline(2,0)(2,1)\psline(2,1)(0,3)\psline(3,2)(2,3)\psline(2,1)(4,3)
\put(2,0){\tiny $ W $}\put(0,3){\tiny $ X $}\put(2,3){\tiny $ Y $}\put(4,3){\tiny $ Z $}
\put(-2,1){ $ \longrightarrow $}\put(-2,2){\tiny $F^{X,Y,Z}_{W}$}
\put(5,0){,}
\end{pspicture}$
\hspace{1cm}$\begin{pspicture}(0,0)(2,3)
\psline(1,0)(1,1)
\psbezier(1,1)(2,1.8)(2,2)(0,3)\psbezier[linecolor=white,linewidth=5pt](1,1)(0,1.8)(0,2)(2,3)
\psbezier(1,1)(0,1.8)(0,2)(2,3)
\put(0,3){\tiny $ X $}\put(2,3){\tiny $ Y $}\put(1,0){\tiny $ Z $}
\end{pspicture}$
\hspace{1cm}$\begin{pspicture}(0,0)(2,3)
\psline(1,0)(1,1)\psline(1,1)(0,3)\psline(1,1)(2,3)
\put(0,3){\tiny $ X $}\put(2,3){\tiny $ Y $}\put(1,0){\tiny $ Z $}
\put(-2.5,1.2){$= \: R^{X,Y}_{Z}$}
\put(3,0){,}
\end{pspicture}$
\hspace{1cm}$\begin{pspicture}(0,0)(2.5,3)
\psline(0,0)(0,1)\psline(0,2)(0,3)
\psbezier(0,2)(0.5,0.5)(1.5,0.5)(1.5,1.5)\psline[linecolor=white,linewidth=5pt](0.1,1.2)(0.3,1.7)
\psbezier(0,1)(0.5,2.5)(1.5,2.5)(1.5,1.5)
\put(0,0){\tiny $ X $}
\end{pspicture}$
\hspace{0.5cm}$\begin{pspicture}(0,0)(2,3)
\psline(0.5,0)(0.5,3)
\put(0.5,0){\tiny $X$}
\put(-2,1.2){$= \: \theta_{X}$}
\end{pspicture}$

\caption{\small Graphical notations of associativity $F$, braiding $R$, and twist $\theta$}
\label{FRtheta}
\end{figure}

\section{Generalized Yang-Baxter operators from ribbon categories }\label{section-gYB}

We assume all multiplicities $N^{k}_{i, j}$ are either $0$ or $1$ for simplicity. However all results can be extended to the general case. The following definition is due to \cite{KW}.

\begin{defn}

Let $\mathcal{C}$ be a ribbon fusion category and $X$ be an object. We call $X$ a $(d,3,1)$-gYBE object with respect to a set $\mathcal{L}=\{X_i \}_{i \in I} \subset \Irr (\mathcal{C})$ if the set $\mathcal{L}$ consists of $d$ simple objects and $X \otimes X_j \cong \oplus_{i \in I}X_i$ for all $j \in I$.

\end{defn}

We will consider only a simple object $X$ for gYBE object as otherwise twist $\theta_X$ would be a matrix rather than a scalar. For each $(d,3,1)$-gYBE object $X$ with respect to a set $\mathcal{L}=\{X_i \}_{i \in I}$ one obtains a gYB-operator $R_{X, \mathcal{L}}$ of type $(d,3,1)$ as follows. Let $V^{(n)}_{X, \mathcal{L}}=\oplus_{i_1, i_{n+1} \in I} \Hom \left(X_{i_{n+1}}, X_{i_1} \otimes X^{\otimes n}   \right)$. The vector space $V^{(n)}_{X, \mathcal{L}}$ is $d^{n+1}$ dimensional and has an orthogonal basis consisting of all admissible trees as shown in Figure \ref{tree} in graphical notation. $(d, 3, 1)$-gYB-operator $R_{X, \mathcal{L}}$ is obtained by applying of braiding $c_{X, X}$ onto the two strands labeled by $X$ in $V^{(2)}_{X, \mathcal{L}}$. Braid group representation $\rho^{R_{X, \mathcal{L}}}_{n}: B_n \rightarrow \End(V^{(n)}_{X, \mathcal{L}})$ is defined by mapping a braid generator $\sigma_i$ to the action of $c_{X, X}$ on $i$-th and $(i+1)$-th strands labeled by $X$. If we identify $V^{(n)}_{X, \mathcal{L}} $ with $V^{\otimes n+1}$ for a $d$-dimensional vector space $V$ with a basis $\{ v_{i} \}_{i \in I}$, the set of all admissible trees shown in Figure \ref{tree} corresponds to an orthogonal basis $\{ v_{i_1}\otimes v_{i_2} \otimes \cdots \otimes v_{i_{n+1}} | i_1,i_2, \ldots , i_{n+1} \in I \}$ of $V^{\otimes n+1}$ and $\rho^{R_{X, \mathcal{L}}}_{n} (\sigma_{i})= \Id^{\otimes i-1}_{V}\otimes R_{X, \mathcal{L}} \otimes \Id^{\otimes n-i-1}_{V}$. The following theorem is due to \cite{KW}.

\vspace{1cm}
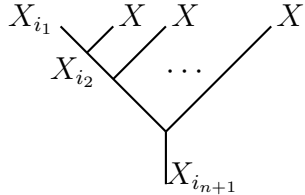
\begin{figure}[h]\psset{unit=7mm}

$\begin{pspicture}(0,0)(4,3)
\psline(2,0)(2,1)
\psline(2,1)(0,3)
\psline(0.5,2.5)(1,3)
\psline(1,2)(2,3)
\psline(2,1)(4,3)
\put(-1,3){$ X_{i_1}$}
\put(-0.2,2){$X_{i_2}$}
\put(2,0){$X_{i_{n+1}}$}
\put(2,2){$\cdots$}
\put(1.1,3){$X$}
\put(2.1,3){$X$}
\put(4.1,3){$X$}
\end{pspicture}$

\caption{\small All admissible labelings $X_{i_{1}}, X_{i_{2}}, \ldots, X_{i_{n+1}} \in \mathcal{L}$ of $n+1$ edges other than $X$-labeled edges form an orthogonal basis of $V^{(n)}_{X, \mathcal{L}}$ and thus $V^{(n)}_{X, \mathcal{L}}$ is $d^{n+1}$ dimensional. }
\label{tree}
\end{figure}

\begin{thm}\label{KWthm}
\begin{enumerate}
\item Any $(d, 3, 1)$-gYBE object $X$ leads to a solution of the $(d, 3, 1)$-gYBE which satisfies the far commutativity.
\item The number of distinct eigenvalues of each braid generator is less than or equal to  $\sum_{Y_k \in Irr{\mathcal{C}}}\dim \left( \Hom (Y_k, X\otimes X) \right)$. The resulting braid group representation is always reducible.
\item $\dim (X)=d$, where $d$ is the cardinality of the index set $I$ for $\mathcal{L}$.
\end{enumerate}
\end{thm}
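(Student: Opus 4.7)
The plan is to work directly in the tree-basis picture of Figure \ref{tree}: by construction $R_{X,\mathcal{L}}$ acts on $V^{(2)}_{X,\mathcal{L}}$ by applying $c_{X,X}$ to the two adjacent $X$-strands, and more generally $\rho^{R_{X, \mathcal{L}}}_n(\sigma_i)$ braids the $i$-th and $(i+1)$-th $X$-strands of an admissible tree. All three claims should follow by transporting standard categorical properties of $c_{X,X}$ through the identification $V^{(n)}_{X,\mathcal{L}} \cong V^{\otimes n+1}$.

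For part (1), the gYBE on $V^{\otimes 4}$ translates, under the above identification, to $\rho(\sigma_1 \sigma_2 \sigma_1) = \rho(\sigma_2 \sigma_1 \sigma_2)$ on $V^{(3)}_{X,\mathcal{L}}$. I would apply each side to a tree basis vector, use associativity $F$-moves to recouple the three consecutive $X$-strands into a single $X^{\otimes 3}$ subtree, and then invoke the hexagon-enforced braid identity $(c_{X,X} \otimes \Id_X)(\Id_X \otimes c_{X,X})(c_{X,X} \otimes \Id_X) = (\Id_X \otimes c_{X,X})(c_{X,X} \otimes \Id_X)(\Id_X \otimes c_{X,X})$ in $\End(X^{\otimes 3})$; reassembling via the inverse $F$-moves yields the required identity in the original basis. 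Far commutativity is immediate: when the two generators touch disjoint $X$-strands they commute by bifunctoriality of $\otimes$. For part (2), decompose $X \otimes X = \bigoplus_{Y_k} N^{Y_k}_{X,X} Y_k$ in $\Irr(\mathcal{C})$. Under the multiplicity-one assumption $c_{X,X}$ acts as the scalar $R^{X,X}_{Y_k}$ on each simple summand, so the distinct eigenvalues of $R_{X, \mathcal{L}}$ form a subset of $\{R^{X,X}_{Y_k}\}_k$ indexed by the simple summands of $X \otimes X$, whose total number is $\sum_k \dim \Hom(Y_k, X \otimes X)$. For reducibility, every generator $\rho(\sigma_i)$ fixes the top edge $X_{i_1}$ and the input edge $X_{i_{n+1}}$ of the tree, since the braiding only modifies the internal chain edge between the braided $X$-strands; hence $V^{(n)}_{X,\mathcal{L}}$ decomposes as a direct sum of nonzero invariant subspaces indexed by $(i_1, i_{n+1}) \in I \times I$, a nontrivial splitting whenever $d \geq 2$. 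For part (3), compute quantum dimensions on both sides of $X \otimes X_j \cong \bigoplus_{i \in I} X_i$ to obtain $\dim(X)\dim(X_j) = \sum_{i \in I}\dim(X_i) =: D$ for every $j \in I$. Since $D$ is independent of $j$, $\dim(X_j) = D/\dim(X)$ is constant on $\mathcal{L}$; calling this common value $c$, we have $D = dc$ and $\dim(X) \cdot c = D = dc$, so $\dim(X) = d$.

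The most delicate point is the $F$-move bookkeeping in part (1): the tree basis encodes a particular bracketing of the tensor product, and one must pass through a recoupling in which the three adjacent $X$-strands appear as an $X^{\otimes 3}$ subfactor before the hexagon identity can be applied. The coherence theorem guarantees a well-defined identity in the end, but I expect the main technical effort to be verifying that reassembly in the original basis produces exactly the stated gYBE rather than an $F$-conjugated variant. This is precisely where the $(d,3,1)$-gYBE structure genuinely departs from the ordinary Yang--Baxter equation, since the $R$-operator now acts on three tensor factors and the bracketing choices actually matter.
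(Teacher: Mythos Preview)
The paper does not actually prove this theorem: it is stated with the attribution ``The following theorem is due to \cite{KW}'' and no argument is given in the present paper. So there is no in-paper proof to compare against, only the reference to Kitaev--Wang.

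That said, your sketch is a sound reconstruction of what one expects the argument to be. Part (3) is a clean Perron--Frobenius / quantum-dimension count and is entirely correct (note it also yields the fact, used later in the proof of Theorem~\ref{equal}, that all $\dim(X_j)$ for $X_j\in\mathcal{L}$ are equal). Part (2) is correct under the paper's standing multiplicity-free assumption; in the general case $c_{X,X}$ restricted to the isotypic block $N^{Y_k}_{X,X}\cdot Y_k$ can have up to $N^{Y_k}_{X,X}$ eigenvalues, which is exactly what the bound $\sum_k\dim\Hom(Y_k,X\otimes X)$ records, so the statement still holds. Your reducibility argument via the invariance of the boundary labels $(i_1,i_{n+1})$ is the right one and matches the observation in Remark~\ref{rmk-1}; strictly speaking it requires $d\geq 2$, but the $d=1$ case is degenerate. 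For part (1), your plan---recouple the three adjacent $X$-strands via $F$-moves into a contiguous $X^{\otimes 3}$ block, apply the categorical braid relation there, and recouple back---is the standard mechanism, and Mac Lane coherence guarantees that the $F$-conjugations cancel so that you land back on the nose in the original tree basis. Far commutativity is indeed immediate from functoriality of $\otimes$, since non-adjacent generators act on disjoint $X$-strand pairs and the intervening internal edges are untouched.
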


\begin{rmk}\label{rmk-1}
\begin{enumerate}
\item $(d, 3, 1)$-gYB-operator $R_{X, \mathcal{L}}$ is given by a composite $\oplus_{i,j \in I} \left(F^{X_i,X,X}_{X_j}\right)^{-1} \circ \diag(R^{X,X}_{X_k}) \circ F^{X_i,X,X}_{X_j}$ where $R^{X,X}_{X_k}$ is the braiding eigenvalue of $c_{X,X}$ on a subobject $X_k$ of $X \otimes X$ for which $N^{X_i,X_k}_{X_j}$ is nonzero.

\item Using the orthogonal basis $\{ v_{i_1}\otimes v_{i_2} \otimes v_{i_3} | i_1, i_2, i_3 \in I\}$ of $V^{\otimes 3}$, $R_{X, \mathcal{L}}$ determines the multiindexed matrix $\left( (R_{X, \mathcal{L}})^{j_1,j_2,j_3}_{i_1,i_2,i_3} \right)$ by the equation $R_{X, \mathcal{L}}(v_{i_1}\otimes v_{i_2} \otimes v_{i_3})=\sum_{j_1, j_2, j_3 \in I} (R_{X, \mathcal{L}})^{j_1,j_2,j_3}_{i_1,i_2,i_3}  v_{j_1}\otimes v_{j_2} \otimes v_{j_3}$. From the construction, it is easy to see that $R_{X, \mathcal{L}} \in \End(V^{\otimes 3})$ acts diagonally on the first and third tensor factors. That is, $(R_{X, \mathcal{L}})^{j_1,j_2,j_3}_{i_1,i_2,i_3}=0$ unless $i_1=j_1$ and $i_3=j_3$. As a result, $\rho^{R_{X, \mathcal{L}}}_{n}(\xi)$ acts diagonally on the last tensor factor of $V^{\otimes k+m(n-2)}$ for any $\xi \in B_n$. And thus if $f \in \End(V^{\otimes k+m(n-2)})$ acts off-diagonally on the last tensor factor, then $f$ belongs to $\im(\rho^R_n)^\bot $.

\end{enumerate}
\end{rmk}

\section{Invariants of links}\label{section-inv}

Every object in a ribbon category $\mathcal{C}$ gives rise to an isotopy invariant of oriented \emph{framed} links (or oriented ribbon graphs). We will denote this invariant by $\langle L \rangle^{fr}_{\mathcal{C},X}$ for an oriented framed link $L$. It is preserved under Reidemeister moves $\Omega 2$ and $\Omega 3$ but not under $\Omega 1$. Rather it is preserved under $\Omega 1'$ with the choice of $\alpha = \theta_X$ (see Figure \ref{Reidemeister} where the orientation of each strand is arbitrary). It is easy to see that $$\langle L \rangle^{}_{\mathcal{C},X}=\theta^{-w(D)}_{X}\langle L \rangle^{fr}_{\mathcal{C},X}$$ is an isotopy invariant of oriented links as it is preserved under Reidemeister moves $\Omega 1$, $\Omega 2$ and $\Omega 3$ where $w(D)$ is the writhe of a link diagram $D$ of an oriented link $L$ and $\theta_X$ is twist coefficient of $X$ in category $\mathcal{C}$. Note that if $X$ is self-dual then $\langle \:\:\: \rangle^{fr}_{\mathcal{C},X}$ is an invariant of \emph{unoriented} framed links. Next theorem shows that link invariant $T_S$ obtained from the gYB-operator $R_{X,\mathcal{L}}$ via certain enhancement is the same as invariant $\langle \:\:\: \rangle^{}_{\mathcal{C},X}$ up to a normalization factor.

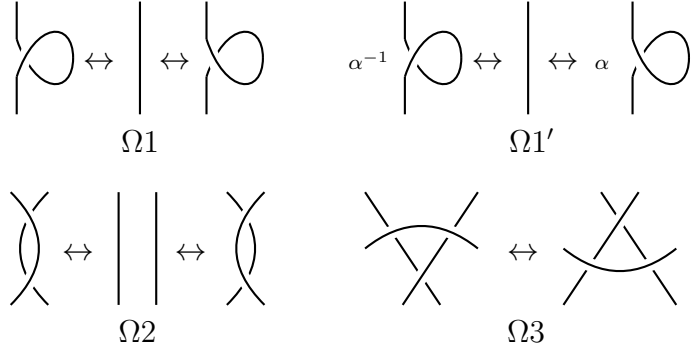
\begin{figure}[h]\psset{unit=5mm}

$\begin{pspicture}(0,0)(2.5,3)
\psline(0,0)(0,1)\psline(0,2)(0,3)
\psbezier(0,2)(0.5,0.5)(1.5,0.5)(1.5,1.5)\psline[linecolor=white,linewidth=5pt](0.1,1.2)(0.3,1.7)
\psbezier(0,1)(0.5,2.5)(1.5,2.5)(1.5,1.5)
\end{pspicture}$
$\begin{pspicture}(0,0)(2,3)
\psline(0.5,0)(0.5,3)
\put(0,-1){$\Omega 1$}
\put(-1,1.2){$\leftrightarrow$}
\put(1,1.2){$\leftrightarrow$}
\end{pspicture}$
$\begin{pspicture}(0,0)(3,3)
\psline(0,0)(0,1)\psline(0,2)(0,3)
\psbezier(0,1)(0.5,2.5)(1.5,2.5)(1.5,1.5)\psline[linecolor=white,linewidth=5pt](0.3,1.2)(0.1,1.7)
\psbezier(0,2)(0.5,0.5)(1.5,0.5)(1.5,1.5)
\end{pspicture}$
\hspace{1cm}$\begin{pspicture}(0,0)(2.5,3)
\psline(0,0)(0,1)\psline(0,2)(0,3)
\psbezier(0,2)(0.5,0.5)(1.5,0.5)(1.5,1.5)\psline[linecolor=white,linewidth=5pt](0.1,1.2)(0.3,1.7)
\psbezier(0,1)(0.5,2.5)(1.5,2.5)(1.5,1.5)
\put(-1.5,1.2){\tiny $ \alpha^{-1}$}
\end{pspicture}$
$\begin{pspicture}(0,0)(2,3)
\psline(0.5,0)(0.5,3)
\put(0,-1){$\Omega 1'$}
\put(-1,1.2){$\leftrightarrow$}
\put(1,1.2){$\leftrightarrow$}
\end{pspicture}$
\hspace{0.5cm}$\begin{pspicture}(0,0)(3,3)
\psline(0,0)(0,1)\psline(0,2)(0,3)
\psbezier(0,1)(0.5,2.5)(1.5,2.5)(1.5,1.5)\psline[linecolor=white,linewidth=5pt](0.3,1.2)(0.1,1.7)
\psbezier(0,2)(0.5,0.5)(1.5,0.5)(1.5,1.5)
\put(-1,1.2){\tiny $ \alpha$}
\end{pspicture}$

\vspace{1cm}
\hspace{-1cm}$\begin{pspicture}(0,0)(2,3)
\psbezier(1,3)(0,2)(0,1)(1,0)\psline[linecolor=white,linewidth=5pt](0.3,2.6)(0.6,2.2)\psline[linecolor=white,linewidth=5pt](0.3,0.3)(0.6,0.8)
\psbezier(0,3)(1,2)(1,1)(0,0)
\end{pspicture}$
\hspace{0.3cm}$\begin{pspicture}(0,0)(2,3)
\psline(0,0)(0,3)\psline(1,0)(1,3)
\put(0,-1){$\Omega 2$}
\put(-1.5,1.2){$\leftrightarrow$}
\put(1.5,1.2){$\leftrightarrow$}
\end{pspicture}$
\hspace{0.3cm}$\begin{pspicture}(0,0)(1,3)
\psbezier(0,3)(1,2)(1,1)(0,0)\psline[linecolor=white,linewidth=5pt](0.7,2.6)(0.4,2.2)\psline[linecolor=white,linewidth=5pt](0.7,0.3)(0.4,0.8)
\psbezier(1,3)(0,2)(0,1)(1,0)
\end{pspicture}$
\hspace{1.2cm}$\begin{pspicture}(0,0)(3,3)
\psline(0,3)(2,0)\psline[linecolor=white,linewidth=5pt](1,0)(3,3)\psline(1,0)(3,3)
\psbezier[linecolor=white,linewidth=5pt](0,1.5)(1,2.3)(2,2.3)(3,1.5)
\psbezier(0,1.5)(1,2.3)(2,2.3)(3,1.5)
\end{pspicture}$
\hspace{1cm}$\begin{pspicture}(0,0)(3,3)
\psline(1,3)(3,0)\psline[linecolor=white,linewidth=5pt](0,0)(2,3)\psline(0,0)(2,3)
\psbezier[linecolor=white,linewidth=5pt](0,1.5)(1,0.7)(2,0.7)(3,1.5)
\psbezier(0,1.5)(1,0.7)(2,0.7)(3,1.5)
\put(-1.5,-1){$\Omega 3$}
\put(-1.5,1.2){$\leftrightarrow$}
\end{pspicture}$

\vspace{1cm}\caption{\small Reidemeister moves }
\label{Reidemeister}
\end{figure}

\begin{thm}\label{equal}
Let $\mathcal{C}$ be a ribbon fusion category. Let $X \in \Irr (\mathcal{C})$ be a $(d,3,1)$-gYBE object with respect to a set $\mathcal{L}$ and $R_{X,\mathcal{L}}$ be the corresponding gYB-operator. Then $S=(R_{X,\mathcal{L}}, \mu=\Id_V, \alpha=\theta_X,\beta=1 )$ is an EgYB-operator and we have

$$\langle L \rangle^{}_{\mathcal{C},X}= d^{-1}\:\:T_S(L)$$

\noindent for any oriented link $L$.
\end{thm}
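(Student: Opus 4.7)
The plan is to establish a single bridging identity,
\[
\tr\bigl(\rho^{R_{X,\mathcal{L}}}_n(\xi)\bigr) = d \cdot \langle \hat{\xi} \rangle^{fr}_{\mathcal{C}, X} \qquad \text{for every } \xi \in B_n,
\]
and then deduce both the EgYB status of $S$ and the theorem's normalization as corollaries. Condition (i) of Definition \ref{def-EgYB} is immediate since $\mu = \Id_V$ commutes with everything.

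To prove the bridging identity I would decompose $V^{(n)}_{X,\mathcal{L}} = \bigoplus_{i_1, i_{n+1} \in I} H(i_1, i_{n+1})$ with $H(i_1, i_{n+1}) := \Hom(X_{i_{n+1}}, X_{i_1} \otimes X^{\otimes n})$. By Remark \ref{rmk-1}(2), $\rho^{R_{X,\mathcal{L}}}_n(\xi)$ preserves each block, acting as $v \mapsto (\Id_{X_{i_1}} \otimes \xi_*) \circ v$, where $\xi_*:X^{\otimes n} \to X^{\otimes n}$ is the morphism in $\mathcal{C}$ corresponding to $\xi$. Using the elementary fact that for an endomorphism $f$ of the $X_{i_{n+1}}$-isotypic component $X_{i_{n+1}} \otimes H$ the matrix trace on the multiplicity space $H$ equals $d_{i_{n+1}}^{-1}\,\Tr_{\mathcal{C}}(f)$, summing over $i_{n+1}$ reconstructs $\Tr_{\mathcal{C}}(\Id_{X_{i_1}} \otimes \xi_*) = d_{i_1} \Tr_{\mathcal{C}}(\xi_*)$. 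The gYBE hypothesis $X \otimes X_j = \bigoplus_{i \in I} X_i$, being independent of $j \in I$, forces $d_{X_j}$ to take a common value $d_I$ on $\mathcal{L}$, while Theorem \ref{KWthm}(3) gives $\sum_{i_1 \in I} d_{i_1} = d \cdot d_I$. The $d_I$'s cancel, yielding $\tr(\rho^{R_{X,\mathcal{L}}}_n(\xi)) = d \cdot \Tr_{\mathcal{C}}(\xi_*) = d \cdot \langle \hat{\xi} \rangle^{fr}_{\mathcal{C}, X}$.

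For condition (ii) of Definition \ref{def-EgYB}, I would reduce to a Markov II calculation. Writing $\rho^{R_{X,\mathcal{L}}}_{n+1}(\eta) = \rho^{R_{X,\mathcal{L}}}_n(\eta) \otimes \Id_V$ and $\rho^{R_{X,\mathcal{L}}}_{n+1}(\sigma_n) = \Id^{\otimes(n-1)} \otimes R_{X,\mathcal{L}}$, a direct partial-trace computation over the last tensor factor gives
\[
\tr\bigl(\rho^{R_{X,\mathcal{L}}}_n(\eta) \circ [\Id^{\otimes(n-1)} \otimes \Sp_{3,1}(R_{X,\mathcal{L}})]\bigr) = \tr\bigl(\rho^{R_{X,\mathcal{L}}}_{n+1}(\eta \sigma_n)\bigr)
\]
for every $\eta \in B_n$. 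Combining the bridging identity with the framed Reidemeister I rule $\langle \widehat{\eta\sigma_n^{\pm 1}} \rangle^{fr}_{\mathcal{C}, X} = \theta_X^{\pm 1} \langle \hat{\eta} \rangle^{fr}_{\mathcal{C}, X}$ (the $\Omega 1'$ move in Figure \ref{Reidemeister}) then yields $\tr(\rho^{R_{X,\mathcal{L}}}_n(\eta) \circ [\Id^{\otimes(n-1)} \otimes (\Sp_{3,1}(R_{X,\mathcal{L}}) - \theta_X \Id^{\otimes 2})]) = 0$ for all $\eta \in B_n$. Since $R_{X,\mathcal{L}}$ inherits unitarity from the braiding $c_{X,X}$ in the pseudo-unitary setting, the representation $\rho^{R_{X,\mathcal{L}}}_n$ is unitary, and this trace-vanishing condition becomes the required orthogonality $\Id^{\otimes(n-1)} \otimes (\Sp_{3,1}(R_{X,\mathcal{L}}) - \theta_X \Id^{\otimes 2}) \in \im(\rho^{R_{X,\mathcal{L}}}_n)^\bot$. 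The analogous claim for $R_{X,\mathcal{L}}^{-1}$ follows identically using $\sigma_n^{-1}$.

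The main equality then drops out: substituting the bridging identity into the definition of $T_S$ (with $\alpha = \theta_X$, $\beta = 1$, $\mu = \Id$) gives
\[
T_S(L) = \theta_X^{-w(D)} \tr\bigl(\rho^{R_{X,\mathcal{L}}}_n(\xi)\bigr) = \theta_X^{-w(D)} \cdot d \cdot \langle \hat{\xi} \rangle^{fr}_{\mathcal{C}, X} = d \cdot \langle L \rangle_{\mathcal{C}, X}.
\]
The main obstacle will be executing the dimension bookkeeping cleanly in the bridging identity --- separating matrix traces on multiplicity spaces from categorical traces in $\mathcal{C}$, and tracking how the constant quantum dimension $d_I$ on $\mathcal{L}$ combines with $\sum_{i_1 \in I} d_{i_1} = d \cdot d_I$ to produce the clean factor $d$ rather than a more complicated constant.
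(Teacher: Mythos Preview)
Your bridging identity $\tr(\rho^{R}_n(\xi)) = d \cdot \langle \hat\xi \rangle^{fr}_{\mathcal{C},X}$ and its proof are correct and coincide with what the paper does for the second statement: the paper's graphical computation of $\sum_{i\in I}\dim(X_i)\cdot\overline{\beta_X}$ in two ways is exactly your multiplicity-space argument written diagrammatically, with the common value $\varepsilon = d_I$ on $\mathcal L$ cancelling just as you describe. So for the main equality the two approaches are the same in substance.

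The gap is in your treatment of condition (ii). From the bridging identity and Markov~II you correctly obtain $\tr\bigl(\rho^R_n(\eta)\cdot Y\bigr)=0$ for all $\eta\in B_n$, where $Y=\Id^{\otimes(n-1)}\otimes(\Sp_{3,1}(R)-\theta_X\Id^{\otimes 2})$. But the orthogonality in Definition~\ref{def-EgYB} is with respect to $\langle f,g\rangle=\tr(f^*g)$, so what is needed is $\tr\bigl(\rho^R_n(\eta)^*\cdot Y\bigr)=0$. Your passage from one to the other invokes unitarity of $R_{X,\mathcal L}$, which is not part of the hypotheses (the theorem is stated for an arbitrary ribbon fusion category), and ``pseudo-unitary'' alone does not force the braiding matrices to be unitary. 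The paper avoids this issue by proving a stronger, pointwise statement: it computes directly that $\sum_{k\in I} R^{i,j,k}_{i,j,k}=\theta_X$ for \emph{each} fixed $i,j$, so that $\Sp_{3,1}(R)-\theta_X\Id$ has vanishing diagonal in the second tensor slot. Combined with the observation in Remark~\ref{rmk-1}(2) that every $\rho^R_n(\eta)$ (hence also $\rho^R_n(\eta)^*$) is diagonal in the last tensor slot, this yields $Y\in\im(\rho^R_n)^\bot$ without any unitarity assumption. Your reduction is conceptually neat, but to make it work in the stated generality you would need to either prove the pointwise identity (which is essentially the paper's first graphical calculation, not a consequence of the bridging identity alone) or add a hypothesis ensuring $\im(\rho^R_n)$ is $*$-closed.
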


\begin{proof}
 Note that all quantum dimensions $\dim (X_i)$ for $X_i \in \mathcal{L}$ are the same. We denote it as $\varepsilon$. Also we denote $R_{X,\mathcal{L}}$ as simply $R$ for simplicity. Recall that $\dim(X)=d$ from Theorem \ref{KWthm}.

At first, let us prove the first statement. The commutativity condition $R\circ \mu^{\otimes 3} = \mu^{\otimes 3} \circ R$ is trivially hold as $\mu$ is the identity. For orthogonality condition, it is sufficient to show that $\Sp_{3,1}(R)-\theta_X \Id_{V^{\otimes 2}}$ acts off-diagonally on the second tensor factor (see Remark \ref{rmk-1}). That is, $\left( \Sp_{3,1}(R)-\theta_X \Id_{V^{\otimes 2}} \right)^{j_1,j_2}_{i_1,i_2}=0$ if $i_2=j_2$. (The other orthogonality can be verified in the same way.) We show equivalently that $\sum_{k \in I} R^{i,j,k}_{i,j,k}=\theta_X$ for any fixed $i, j \in I$. For this, we evaluate the following diagram in two different ways. Here we use the properties presented in equation (\ref{trivalent}) and Figure \ref{basis}. For simplicity we put $i \in I$ for $X_i \in \mathcal{L}$ and all other edges without any label attached should be read as edges labeled by $X$.

On one hand,

\vspace{-2cm}
$\begin{pspicture}(0,-1)(4,5)\psset{unit=5mm}
\psline(0,2.5)(0,3)
\psline(0,2.5)(1,2)
\psline(0,3)(1,4)
\psline(1,3)(1,4)
\psline(1,3)(2,2)
\psline[linecolor=white,linewidth=5pt](1.3,2.3)(1.7,2.7)
\psline(1,2)(2,3)
\psline(1,0)(1,2)
\psline(2,0)(2,2)
\psline(3,0)(3,3)
\psline(4,0)(4,4)
\psbezier(1,4)(1,5)(4,5)(4,4)
\psbezier(2,3)(2,3.7)(3,3.7)(3,3)
\psbezier(2,0)(2,-0.5)(3,-0.5)(3,0)
\psbezier(1,0)(1,-1)(4,-1)(4,0)

\put(-0.4,3){\tiny $ i$}
\put(0.5,1){\tiny$j$}
\end{pspicture}$
\hspace{-2cm}$\begin{array}{c} = \:\: \sum_{k\in I} \frac{1}{\sqrt{ d   }}    \\ \\ \\ \\ \\ \\ \\ \\\end{array}$
$\begin{pspicture}(0,-1)(4,5)\psset{unit=5mm}
\psline(0,2.5)(0,3)
\psline(0,2.5)(1,2)
\psline(0,3)(1,4)
\psline(1,3)(1,4)
\psline(1,3)(2,2)
\psline[linecolor=white,linewidth=5pt](1.3,2.3)(1.7,2.7)
\psline(1,2)(2,3)
\psline(1,1.5)(1,2)
\psline(2,1.5)(2,2)
\psline(1,1.5)(1.5,1.2)
\psline(2,1.5)(1.5,1.2)
\psline(1.5,0.3)(1.5,1.2)
\psline(1,0)(1.5,0.3)
\psline(2,0)(1.5,0.3)
\psline(3,0)(3,3)
\psline(4,0)(4,4)
\psbezier(1,4)(1,5)(4,5)(4,4)
\psbezier(2,3)(2,3.7)(3,3.7)(3,3)
\psbezier(2,0)(2,-0.5)(3,-0.5)(3,0)
\psbezier(1,0)(1,-1)(4,-1)(4,0)

\put(-0.4,3){\tiny$i$}
\put(0.5,1.5){\tiny$j$}
\put(0.5,0){\tiny$j$}
\put(1.6,0.5){\tiny$k$}
\end{pspicture}$
\hspace{-2cm}$\begin{array}{c} = \:\: \sum_{k\in I} \frac{1}{\sqrt{ d   }}   \\ \\ \\ \\ \\ \\ \\ \\\end{array}$
$\begin{pspicture}(0,-0.5)(4,5.5)\psset{unit=5mm}
\psline(0,3)(2,5)
\psline(0,3)(2,1)
\psline(1,4)(3,2)
\psline(3,2)(2,1)
\psline[linecolor=white,linewidth=5pt](1.5,2.5)(2.5,3.5)
\psline(1,2)(3,4)
\psline(3,4)(2,5)
\psline(4,5)(4,1)
\psbezier(2,5)(2,6)(4,6)(4,5)
\psbezier(2,1)(2,0)(4,0)(4,1)
\put(0.1,3.6){\tiny$ i$}
\put(0.9,1.3){\tiny$j$}
\put(0.9,4.4){\tiny$j$}
\put(1.6,0.4){\tiny$k$}
\put(1.6,5.4){\tiny$k$}
\end{pspicture}$

\vspace{-5cm}
\noindent $\begin{array}{c} = \:\: \sum_{k\in I} \frac{1}{\sqrt{ d   }} \sum_{j'\in I} R^{i,j',k}_{i,j,k}   \\ \\ \\ \\ \\ \\ \\ \\\end{array}$
$\begin{pspicture}(0,0)(4,6)\psset{unit=5mm}
\psline(0,3)(2,5)
\psline(0,3)(2,1)
\psline(1,4)(2,3)
\psline(2,3)(1,2)
\psline(2,5)(3,4)
\psline(2,1)(3,2)
\psline(3,4)(3,2)
\psline(4,5)(4,1)
\psbezier(2,5)(2,6)(4,6)(4,5)
\psbezier(2,1)(2,0)(4,0)(4,1)

\put(0,3.6){\tiny$ i$}
\put(0.8,1.3){\tiny$j'$}
\put(0.9,4.5){\tiny$j$}
\put(1.6,0.4){\tiny$k$}
\put(1.6,5.4){\tiny$k$}
\end{pspicture}$
\hspace{-2cm}$\begin{array}{c} = \:\: \sum_{k\in I} \frac{1}{\sqrt{ d   }}  R^{i,j,k}_{i,j,k} \sqrt{d^2 \varepsilon^2}\:\: = \varepsilon\sqrt{d} \: \sum_{k\in I}  R^{i,j,k}_{i,j,k}\\ \\  \\ \\ \\ \\ \\ \\\end{array}$

On the other hand, we can resolve the twist at first with a factor $\theta_X$ multiplied and then use the properties of trivalent basis to evaluate the resulting diagram. We obtain $\theta_X \varepsilon \sqrt{ d}$ in this way. By comparing these two evaluations, we obtain the desired equality.

Now let us prove the second statement. Note that both sides have the same normalization factor $\theta^{-w(D)}_X=\alpha^{-w(D)}$. So we will remove the factors and prove the equality for oriented \emph{framed} links. Let $L$ be an oriented \emph{framed} links and $\beta \in B_n$ be a braid whose closure is isotopic to $L$ as an oriented framed link. We denote the braid diagram whose all edges are labeled by $X$ as $\beta_X\in \Hom(X^{\otimes n},X^{\otimes n})$ and its closure as $\overline{\beta_X} \in \Hom(\mathbf{1},\mathbf{1})=\mathbb{C}$. Note that if we evaluate $\overline{\beta_X}$, the value is $\langle L \rangle^{fr}_{\mathcal{C},X}$. In the following we show that $\langle L \rangle^{fr}_{\mathcal{C},X}= d^{-1}\:\: \tr(\rho^R_n(\beta))$. For this we consider a diagram which contains $\overline{\beta_X}$ inside a loop labeled by $X_i \in \mathcal{L}$ and evaluate it in two ways.

\vspace{-5cm}
\noindent$\begin{array}{c} \sum_{i\in I}  \\ \\ \\ \\ \\ \\ \\ \\ \\ \\ \\ \\\end{array}$
$\begin{pspicture}(0,-1)(7,8)\psset{unit=5mm}
\psframe(0.5,4)(3.5,5)
\psline(0,2)(0,5)
\psline(1,2)(1,4)
\psline(2,2)(2,4)
\psline(3,2)(3,4)
\psline(4,2)(4,5)
\psline(5,2)(5,5)
\psline(6,2)(6,5)
\psline(7,2)(7,5)
\psbezier(0,5)(0,8)(7,8)(7,5)
\psbezier(1,5)(1,7)(6,7)(6,5)
\psbezier(2,5)(2,6)(5,6)(5,5)
\psbezier(3,5)(3,5.5)(4,5.5)(4,5)
\psbezier(0,2)(0,-1)(7,-1)(7,2)
\psbezier(1,2)(1,0)(6,0)(6,2)
\psbezier(2,2)(2,1)(5,1)(5,2)
\psbezier(3,2)(3,1.5)(4,1.5)(4,2)
\put(-0.3,5){\tiny$ i$}
\put(1.5,4.4){\tiny$\beta_X$}
\put(2.2,5){\tiny$\cdots$}
\put(2.2,3){\tiny$\cdots$}
\end{pspicture}$
\hspace{-3cm}$\begin{array}{c} = \sum_{i,j_1\in I} \frac{1}{\sqrt{ d    } } \:\:\:  \\ \\ \\ \\ \\ \\ \\ \\ \\ \\ \\ \\\end{array}$
$\begin{pspicture}(0,-1)(7,8)\psset{unit=5mm}
\psframe(0.5,4)(3.5,5)
\psline(0,4)(0,5)
\psline(0,4)(0.5,3.5)\psline(1,4)(0.5,3.5)\psline(0.5,2.5)(0.5,3.5)\psline(0,2)(0.5,2.5)\psline(1,2)(0.5,2.5)
\psline(2,2)(2,4)
\psline(3,2)(3,4)
\psline(4,2)(4,5)
\psline(5,2)(5,5)
\psline(6,2)(6,5)
\psline(7,2)(7,5)
\psbezier(0,5)(0,8)(7,8)(7,5)
\psbezier(1,5)(1,7)(6,7)(6,5)
\psbezier(2,5)(2,6)(5,6)(5,5)
\psbezier(3,5)(3,5.5)(4,5.5)(4,5)
\psbezier(0,2)(0,-1)(7,-1)(7,2)
\psbezier(1,2)(1,0)(6,0)(6,2)
\psbezier(2,2)(2,1)(5,1)(5,2)
\psbezier(3,2)(3,1.5)(4,1.5)(4,2)

\put(-0.3,5){\tiny$ i$}
\put(-0.3,1){\tiny$ i$}
\put(0,3){\tiny$ j_1$}
\put(1.5,4.4){\tiny$\beta_X$}
\put(2.2,5){\tiny$\cdots$}
\put(2.2,3){\tiny$\cdots$}
\end{pspicture}$

\vspace{-8cm}
\noindent$\begin{array}{c} = \sum_{i,j_1,j_2\in I} \left(\frac{1}{\sqrt{ d    } }\right)^2 \:\:\:  \\ \\ \\ \\ \\ \\ \\ \\ \\ \\ \\\end{array}$
$\begin{pspicture}(0,-1)(7,8)\psset{unit=5mm}
\psframe(0.5,4)(3.5,5)
\psline(0,4)(0,5)
\psline(0,4)(1,3.5)\psline(1,4)(0.5,3.75)\psline(2,4)(1,3.5)\psline(1,2.5)(1,3.5)\psline(0,2)(1,2.5)\psline(1,2)(0.5,2.25)\psline(2,2)(1,2.5)
\psline(3,2)(3,4)
\psline(4,2)(4,5)
\psline(5,2)(5,5)
\psline(6,2)(6,5)
\psline(7,2)(7,5)
\psbezier(0,5)(0,8)(7,8)(7,5)
\psbezier(1,5)(1,7)(6,7)(6,5)
\psbezier(2,5)(2,6)(5,6)(5,5)
\psbezier(3,5)(3,5.5)(4,5.5)(4,5)
\psbezier(0,2)(0,-1)(7,-1)(7,2)
\psbezier(1,2)(1,0)(6,0)(6,2)
\psbezier(2,2)(2,1)(5,1)(5,2)
\psbezier(3,2)(3,1.5)(4,1.5)(4,2)
\put(-0.3,5){\tiny$ i$}
\put(-0.3,1){\tiny$ i$}
\put(0.3,3.3){\tiny$ j_1$}
\put(0.3,2.5){\tiny$ j_1$}
\put(1,2.9){\tiny$ j_2$}
\put(1.5,4.4){\tiny$\beta_X$}
\put(2.2,5){\tiny$\cdots$}
\put(2.2,3){\tiny$\cdots$}
\end{pspicture}$
\hspace{-3cm}$\begin{array}{c} = \cdots =  \\ \\ \\ \\ \\ \\ \\ \\ \\ \\ \\ \end{array}$

\vspace{-7.5cm}
\noindent$\begin{array}{c} = \sum_{i,j_1,\ldots ,j_n\in I} \left(\frac{1}{\sqrt{ d    } }\right)^n \: \\ \\ \\ \\ \\ \\ \\ \\ \\ \\ \\ \end{array}$
$\begin{pspicture}(0,-1)(7,8)\psset{unit=5mm}
\psframe(0.5,4)(3.5,5)
\psline(0,4)(0,5)
\psline(0,4)(2,3.5)\psline(1,4)(0.6,3.85)\psline(2,4)(1.2,3.7)\psline(3,4)(2,3.5)\psline(2,2.5)(2,3.5)
\psline(0,2)(2,2.5)\psline(1,2)(0.6,2.15)\psline(2,2)(1.2,2.3)\psline(3,2)(2,2.5)
\psline(4,2)(4,5)
\psline(5,2)(5,5)
\psline(6,2)(6,5)
\psline(7,2)(7,5)
\psbezier(0,5)(0,8)(7,8)(7,5)
\psbezier(1,5)(1,7)(6,7)(6,5)
\psbezier(2,5)(2,6)(5,6)(5,5)
\psbezier(3,5)(3,5.5)(4,5.5)(4,5)
\psbezier(0,2)(0,-1)(7,-1)(7,2)
\psbezier(1,2)(1,0)(6,0)(6,2)
\psbezier(2,2)(2,1)(5,1)(5,2)
\psbezier(3,2)(3,1.5)(4,1.5)(4,2)
\put(-0.3,5){\tiny$ i$}
\put(-0.3,1){\tiny$ i$}
\put(0.4,3.5){\tiny$ j_1$}
\put(0.4,2.3){\tiny$ j_1$}
\put(2,3){\tiny$ j_n$}
\put(1.5,4.4){\tiny$\beta_X$}
\put(2.2,5){\tiny$\cdots$}
\put(2,3.7){\tiny$\cdots$}
\put(2.2,1.8){\tiny$\cdots$}
\end{pspicture}$
\hspace{-3.4cm}$\begin{array}{c} = \sum_{i,j_1,\ldots ,j_n\in I} \left(\frac{1}{\sqrt{ d    } }\right)^n \:  \\ \\ \\ \\ \\ \\ \\ \\ \\ \\ \\ \\\end{array}$
$\begin{pspicture}(0,-1)(6,8)\psset{unit=5mm}
\psline(0,4)(3,7)
\psline(1,5)(2,4)
\psline(2,6)(4,4)
\psline(3,7)(6,4)
\psline(0,4)(3,1)
\psline(1,3)(2,4)
\psline(2,2)(4,4)
\psline(3,1)(6,4)
\psline(7,2)(7,6)
\psbezier(3,7)(3,8)(7,8)(7,6)
\psbezier(3,1)(3,0)(7,0)(7,2)
\psframe[fillcolor=white,fillstyle=solid](1,3.5)(6,4.5)
\put(3,3.9){\tiny$ \beta_X$}
\put(0.2,4.5){\tiny$ i$}
\put(0.8,5.4){\tiny$ j_1$}
\put(0.9,2.3){\tiny$ j_1$}
\put(2.5,0.5){\tiny$ j_n$}
\put(2.5,7.5){\tiny$ j_n$}
\put(3,5.4){\tiny$ \cdots$}
\put(3,2){\tiny$ \cdots$}
\end{pspicture}$

\vspace{-10cm}$\begin{array}{c} = \sum_{i,j_1,\ldots ,j_n,j'_1,\ldots,j'_{n-1}\in I} \left(\frac{1}{\sqrt{ d    } }\right)^n
\left(  \rho^{R}_{n}(\beta)  \right)^{i,j'_1,\ldots, j'_{n-1},j_n}_{i,j_1,\ldots,j_{n-1},j_n}\:\:\:  \\ \\ \\ \\ \\ \\ \\ \\ \\ \\ \\ \\\end{array}$
$\begin{pspicture}(0,0)(6,8)\psset{unit=5mm}
\psline(0,4)(3,7)
\psline(1,5)(2,4)
\psline(2,6)(4,4)
\psline(3,7)(6,4)
\psline(0,4)(3,1)
\psline(1,3)(2,4)
\psline(2,2)(4,4)
\psline(3,1)(6,4)
\psline(7,2)(7,6)
\psbezier(3,7)(3,8)(7,8)(7,6)
\psbezier(3,1)(3,0)(7,0)(7,2)
\put(0.2,4.5){\tiny$ i$}
\put(0.8,5.4){\tiny$ j_1$}
\put(0.8,2.3){\tiny$ j'_1$}
\put(2.5,0.5){\tiny$ j_n$}
\put(2.5,7.5){\tiny$ j_n$}
\put(3,5.4){\tiny$ \cdots$}
\put(3,2){\tiny$ \cdots$}
\end{pspicture}$

\vspace{-2cm}$=\left(\frac{1}{\sqrt{ d    } }\right)^n
\tr \left(  \rho^{R}_{n}(\beta)  \right) \sqrt{\varepsilon d^n \varepsilon}$
$=\varepsilon \cdot \tr \left(  \rho^{R}_{n}(\beta)  \right)$

\vspace{1cm}
On the other hand, we can evaluate the same diagram in the category $\mathcal{C}$ as $\sum_{i \in I} \dim(X_i)\overline{\beta_X} $ $=d \cdot \varepsilon \cdot \langle L \rangle^{fr}_{\mathcal{C},X}$. This implies the second statement.

\end{proof}

\section {Examples}\label{section-examples}

We consider unitary ribbon fusion categories $\mathcal{C}=SO(N)_2$ described in \cite{NR, KW}, where $N=2r+1, r \geq 1$. Simple objects are denoted as $$ \Irr(\mathcal{C})=\{ X_0, X_{2\lambda_1}, X_{\gamma^1}, X_{\gamma^2}, \ldots, X_{\gamma^r}, X_{\varepsilon}, X_{\varepsilon'} \} $$ in \cite{NR}. We will denote $X_0$ as $\mathbf{1}$, $X_{2\lambda_1}$ as $Z$, and $X_{\gamma^i}$ as $X_{i}$ following \cite{KW}. All fusion rules are as follows:

\begin{enumerate}
\item $X_{\varepsilon} \otimes X_{\varepsilon} \cong \mathbf{1} \oplus \oplus^{i=r}_{i=1}X_{i}$
\item $ X_{\varepsilon} \otimes X_{i}\cong X_{\varepsilon}\oplus X_{\varepsilon'}, 1\leq i \leq r$
\item $X_{\varepsilon} \otimes X_{\varepsilon'}\cong Z \oplus \oplus^{i=r}_{i=1}X_{i}$
\item $Z\otimes X_{\varepsilon}\cong X_{\varepsilon'}$
\item $Z \otimes Z \cong \mathbf{1}$
\item $Z \otimes X_i \cong X_i, 1\leq i \leq r$
\item $X_i \otimes X_i \cong \mathbf{1} \oplus Z \oplus X_{\min\{2i, 2r+1-2i\}}, 1\leq i \leq r$
\item $X_i \otimes X_j \cong X_{j-i} \oplus X_{\min\{i+j,2r+1-i-j\}}, i < j$
\end{enumerate}

\noindent It is observed that the simple objects $X_i, 1 \leq i \leq r,$ are $(2, 3, 1)$-gYBE objects with respect to the set $\mathcal{L}=\{X_{\varepsilon},X_{\varepsilon'}\}$ in \cite{KW}.

For explicit examples we consider $X_1$ with $\mathcal{L}=\{X_{\varepsilon},X_{\varepsilon'}\}$. For $N=3$ case, the gYB-operator $R_{X_1,\mathcal{L}}$ is given in \cite{KW}, which is the same as $R_{-1}(3)$ below. For the cases of $N=5$ and $7$, the gYB-operator $R_{X_1,\mathcal{L}}$ is the same as $-R_{+1}(N)$ below. To obtain this, we use the following data:

\begin{equation}\label{data}
\begin{array}{c} R^{X_1,X_1}_{\mathbf{1}}=e^{\pi i (N+1)/N}, R^{X_1,X_1}_{Z}=e^{\pi i /N}, R^{X_1,X_1}_{X_2}=e^{\pi i (N-1)/N}, \theta_{X_1}=e^{\pi i (N-1)/N}\\
F^{X_{\varepsilon},X_1,X_1}_{X_{\varepsilon}}=F^{X_{\varepsilon'},X_1,X_1}_{X_{\varepsilon'}}
=\frac{1}{\sqrt{2}}\left( \begin{smallmatrix} 1&1\\1&-1 \end{smallmatrix}\right), F^{X_{\varepsilon},X_1,X_1}_{X_{\varepsilon'}}=F^{X_{\varepsilon'},X_1,X_1}_{X_{\varepsilon}}
=\frac{1}{\sqrt{2}}\left( \begin{smallmatrix} 1&-1\\1&1 \end{smallmatrix}\right)
\end{array}
\end{equation}

\noindent The braidings above can be obtained from \cite{NR} by using the naturality of twist for all odd $N \geq 5$. To obtain the $F$-matrices we need to solve the pentagon equations, which is in general very tedious. The above $F$ matrices were obtained by doing so for $N=5$ and $N=7$, and we expect that they remain to be correct for all odd $N\geq 9$. Notice that Theorem \ref{main_5} below holds for all odd $N\geq 3$ though. The basis for each $\Hom$-space is ordered as listed in $\Irr (\mathcal{C})$.

$$R_{\nu}(N)= \begin{pmatrix}  \nu C&0&iS&0\\0&-iS&0&C\\iS&0&\nu C&0\\0&C&0&-iS \end{pmatrix} \oplus \begin{pmatrix}-iS&0&C&0\\0&\nu C&0&iS\\C&0&-iS&0\\0&iS&0&\nu C \end{pmatrix}$$

\noindent where $\nu \in \{ +1, -1 \}$, $C =\cos(\pi /N)$, $S=\sin(\pi /N)$, and $i=\sqrt{-1}$. It is straightforward to show that $R_{\nu}(N)$ is a gYB-operator for any $\nu \in \{+1, -1\}$, and thus so is $-R_{\nu}(N)$.

Note that the categories $SO(N)_2$ are self-dual for all $N$ and thus $\langle \:\:\:\:\: \rangle^{fr}_{\mathcal{C},X_1}$ gives rise to an isotopy invariant of \emph{unoriented} framed links. The next theorem shows that the invariants $T_S$ for all odd $N \geq 3$ are specializations of the Kauffman polynomial invariant $F$. (For the uniqueness of the invariant, see Section 4.3 in \cite{Tu}.)

\begin{thm}\label{main_5}
Let $S=(R_{X_1,\mathcal{L}}, \mu=\Id_V, \alpha=\theta_{X_1},\beta=1 )$ for all odd $N \geq 3$. Then for any diagram $D$ of an unoriented link $L$, $\widetilde{T_S}(D)=\alpha^{w(D)}T_S(L)$ satisfies

$$\widetilde{T_S}(D_+)-\widetilde{T_S}(D_-)= \eta\cdot 2i \sin(\pi /N)\left(\widetilde{T_S}(D_0)-\widetilde{T_S}(D_{\infty}) \right)$$

\noindent where $\eta=-1$ for $N=3$ and $\eta=+1$ for all odd $N\geq 5$. The link diagrams $D_+$, $D_-$, $D_0$, and $D_{\infty}$ are identical except in a small disk where they look as in Figure \ref{D+-}.
\end{thm}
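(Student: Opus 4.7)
The plan is to reduce the claimed skein relation for $\widetilde{T_S}$ to a local morphism identity in $\End_{\mathcal{C}}(X_1 \otimes X_1)$ and then to verify that identity by an eigenvalue computation on each simple summand.

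By Theorem \ref{equal}, $\widetilde{T_S}(D) = \alpha^{w(D)} T_S(L) = d \cdot \langle L \rangle^{fr}_{\mathcal{C}, X_1}$ with $d = \dim(X_1) = 2$; since $X_1$ is self-dual, $\langle \cdot \rangle^{fr}_{\mathcal{C}, X_1}$ is an invariant of unoriented framed links obtained by closing up $X_1$-labeled tangles in $\mathcal{C}$. A skein relation among $D_+, D_-, D_0, D_\infty$ at the level of diagrams therefore reduces to the local morphism identity
\[
c_{X_1, X_1} - c_{X_1, X_1}^{-1} = \eta \cdot 2 i \sin(\pi/N)\bigl(\Id_{X_1 \otimes X_1} - e \bigr)
\]
in $\End_{\mathcal{C}}(X_1 \otimes X_1)$, where $e = \cup_{X_1} \circ \cap_{X_1}$ is the cup-cap morphism (normalized so that $e^2 = d \cdot e$). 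The four local pictures represented by $c_{X_1, X_1}$, $c_{X_1, X_1}^{-1}$, $\Id$, $e$ are exactly those producing $D_+$, $D_-$, $D_0$, $D_\infty$ after closing up.

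Because all multiplicities in $X_1 \otimes X_1$ equal $1$, the identity reduces to a check of scalars on each simple summand. For $N \geq 5$, $X_1 \otimes X_1 = \mathbf{1} \oplus Z \oplus X_2$; the braiding acts on $X_k$ as $R^{X_1, X_1}_{X_k}$, the identity acts as $1$, and $e$ acts as $d = 2$ on $\mathbf{1}$ and as $0$ elsewhere. Using the data (\ref{data}),
\begin{align*}
R^{X_1, X_1}_{\mathbf{1}} - (R^{X_1, X_1}_{\mathbf{1}})^{-1} &= 2i\sin\bigl(\tfrac{(N+1)\pi}{N}\bigr) = -2i\sin(\pi/N), \\
R^{X_1, X_1}_{Z} - (R^{X_1, X_1}_{Z})^{-1} &= 2i\sin(\pi/N), \\
R^{X_1, X_1}_{X_2} - (R^{X_1, X_1}_{X_2})^{-1} &= 2i\sin\bigl(\tfrac{(N-1)\pi}{N}\bigr) = 2i\sin(\pi/N),
\end{align*}
which matches $\eta \cdot 2i\sin(\pi/N)(\Id - e)$ (acting as $-\eta \cdot 2i\sin(\pi/N)$ on $\mathbf{1}$ and as $\eta \cdot 2i\sin(\pi/N)$ on $Z$ and $X_2$) precisely for $\eta = +1$, giving the theorem for $N \geq 5$.

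For $N = 3$ the fusion rule is $X_1 \otimes X_1 = \mathbf{1} \oplus Z \oplus X_1$, and the gYB-operator is $R_{-1}(3)$ rather than $-R_{+1}(N)$. The same strategy applies --- compute $c_{X_1, X_1} - c_{X_1, X_1}^{-1}$ on each summand from the $SO(3)_2$ braiding data, compare with $\Id - e$, and read off $\eta$ --- and the main obstacle is exactly to locate the sign in this case: the third braiding eigenvalue together with the Frobenius-Schur / ribbon sign of the cup-cap pairing in $SO(3)_2$ reverses the overall sign, producing $\eta = -1$. This can be pinned down either by a direct spectral analysis of $R_{-1}(3)$ or by computing the relevant categorical data for $SO(3)_2$; once settled, the local identity closes up to the claimed skein relation in both cases.
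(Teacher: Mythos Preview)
Your argument is correct and follows essentially the same route as the paper's: reduce $\widetilde{T_S}$ to the categorical framed invariant $\langle\,\cdot\,\rangle^{fr}_{\mathcal C,X_1}$ via Theorem~\ref{equal}, then verify the local identity $c_{X_1,X_1}-c_{X_1,X_1}^{-1}=\eta\cdot 2i\sin(\pi/N)\,(\Id-e)$ in $\End_{\mathcal C}(X_1\otimes X_1)$ using the braiding eigenvalues from~(\ref{data}). The only cosmetic difference is that you check the scalar on each simple summand directly, while the paper expands $c^{\pm}$ in the basis $\{f_{\mathbf 1},f_Z,f_{X_2}\}$ (with your $e$ equal to $f_{\mathbf 1}$ in that normalization) and simplifies algebraically; both leave the $N=3$ case as a parallel computation.
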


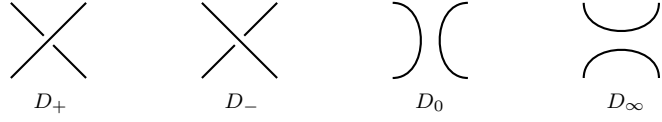
\begin{figure}[h]\psset{unit=7mm}

$\begin{pspicture}(0,0)(2,2)\psset{unit=5mm}
\psline(0,2)(2,0)
\psline[linecolor=white,linewidth=5pt](0.5,0.5)(1.5,1.5)
\psline(0,0)(2,2)
\put(0.6,-0.8){\tiny$ D_{+}$}
\end{pspicture}$
\hspace{1cm}$\begin{pspicture}(0,0)(2,2)\psset{unit=5mm}
\psline(0,0)(2,2)
\psline[linecolor=white,linewidth=5pt](0.5,1.5)(1.5,0.5)
\psline(0,2)(2,0)
\put(0.6,-0.8){\tiny$ D_{-}$}
\end{pspicture}$
\hspace{1cm}$\begin{pspicture}(0,0)(2,2)\psset{unit=5mm}
\psbezier(0,2)(1,2)(1,0)(0,0)
\psbezier(2,2)(1,2)(1,0)(2,0)
\put(0.6,-0.8){\tiny$ D_{0}$}
\end{pspicture}$
\hspace{1cm}$\begin{pspicture}(0,0)(2,2)\psset{unit=5mm}
\psbezier(0,0)(0,1)(2,1)(2,0)
\psbezier(0,2)(0,1)(2,1)(2,2)
\put(0.6,-0.8){\tiny$ D_{\infty}$}
\end{pspicture}$

\vspace{1cm}
\caption{Link diagrams $D_+$, $D_-$, $D_0$, and $D_{\infty}$. We denote corresponding unoriented framed links as $L_+$, $L_-$, $L_0$, and $L_{\infty}$.}\label{D+-}
\end{figure}

\begin{proof}
We prove the theorem for the cases of $N\geq 5$. Note that $\widetilde{T_S}(D)=\langle L \rangle^{fr}_{\mathcal{C},X_1}$ for a diagram $D$ of an unoriented framed link $L$, and thus it suffices to show that

\begin{equation}\label{fr_equality}\langle L_+ \rangle^{fr}_{\mathcal{C},X_1}-\langle L_- \rangle^{fr}_{\mathcal{C},X_1}= 2i \sin(\pi /N)\left( \langle L_0  \rangle^{fr}_{\mathcal{C},X_1}-\langle L_{\infty}\rangle^{fr}_{\mathcal{C},X_1}\right).
\end{equation}

 Since $\Hom(X^{\otimes 2}_{1},X^{\otimes 2}_{1})= \oplus_{Y\in \{\mathbf{1}, Z, X_2\} }  \Hom(Y, X^{\otimes 2}_{1})\otimes \Hom(X^{\otimes 2}_{1},Y)   $, we can express the braidings $c^{\pm}_{X_1,X_1}\in \Hom(X^{\otimes 2}_{1},X^{\otimes 2}_{1})$ as linear combinations of basis morphisms $\{f_{\mathbf{1}}, f_{Z}, f_{X_2}\}$ where $f_{Y} \in \Hom(Y, X^{\otimes 2}_{1})\otimes \Hom(X^{\otimes 2}_{1},Y)$ denotes the basis morphism for each $Y \in \{\mathbf{1}, Z, X_2\} $ (see Figure \ref{basis_morphisms}). From the properties of trivalent basis (see Figure \ref{basis}) and braiding data in equation (\ref{data}), we obtain

$$c^{\pm}_{X_1,X_1}= \left(-\frac{1}{2}e^{\pm \pi i/N}\right)f_{\mathbf{1}}+\left(\frac{1}{2}e^{\pm \pi i/N}\right)f_{Z} -\left(\frac{1}{\sqrt{2}}e^{\mp \pi i/N}\right)f_{X_2}   .$$

\noindent Now then

$c^{+}_{X_1,X_1}-c^{-}_{X_1,X_1}=\left( -i \sin (\pi /N) \right) f_{\mathbf{1}} +   \left( i \sin (\pi /N) \right) \left( f_{Z}  + \sqrt{2} f_{X_2} \right)\\ =\left( -i \sin (\pi /N) \right) f_{\mathbf{1}} +\left( i \sin (\pi /N) \right) \left( 2\Id_{X^{\otimes 2}_{1}} -f_{\mathbf{1}} \right)=2i \sin (\pi /N)\left( \Id_{X^{\otimes 2}_{1}}-f_{\mathbf{1}}  \right)$

\noindent This implies (\ref{fr_equality}).

The case of $N=3$ can be proved in the same way.

\end{proof}

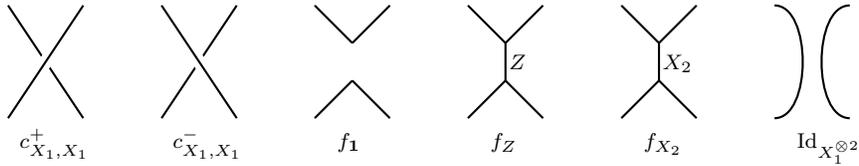
\begin{figure}[h]\psset{unit=7mm}

$\begin{pspicture}(0,0)(2,3)\psset{unit=5mm}
\psline(0,3)(2,0)
\psline[linecolor=white,linewidth=5pt](0.5,0.75)(1.5,2.25)
\psline(0,0)(2,3)
\put(0.3,-0.8){\tiny$ c^{+}_{X_1,X_1}$}
\end{pspicture}$
\hspace{0.5cm}$\begin{pspicture}(0,0)(2,3)\psset{unit=5mm}
\psline(0,0)(2,3)
\psline[linecolor=white,linewidth=5pt](0.5,2.25)(1.5,0.75)
\psline(0,3)(2,0)
\put(0.3,-0.8){\tiny$ c^{-}_{X_1,X_1}$}
\end{pspicture}$
\hspace{0.5cm}$\begin{pspicture}(0,0)(2,3)\psset{unit=5mm}
\psline(0,0)(1,1)\psline(2,0)(1,1)\psline(0,3)(1,2)\psline(2,3)(1,2)
\put(0.6,-0.8){\tiny$ f_{\mathbf{1}}$}
\end{pspicture}$
\hspace{0.5cm}$\begin{pspicture}(0,0)(2,3)\psset{unit=5mm}
\psline(0,0)(1,1)\psline(2,0)(1,1)\psline(0,3)(1,2)\psline(2,3)(1,2)\psline(1,1)(1,2)
\put(0.6,-0.8){\tiny$ f_{Z}$}\put(1.1,1.3){\tiny$ Z$}
\end{pspicture}$
\hspace{0.5cm}$\begin{pspicture}(0,0)(2,3)\psset{unit=5mm}
\psline(0,0)(1,1)\psline(2,0)(1,1)\psline(0,3)(1,2)\psline(2,3)(1,2)\psline(1,1)(1,2)
\put(0.6,-0.8){\tiny$ f_{X_2}$}\put(1.1,1.3){\tiny$ X_2$}
\end{pspicture}$
\hspace{0.5cm}$\begin{pspicture}(0,0)(2,3)\psset{unit=5mm}
\psbezier(0,0)(1,0)(1,3)(0,3)\psbezier(2,0)(1,0)(1,3)(2,3)
\put(0.6,-0.8){\tiny$ \Id_{X^{\otimes 2}_{1}}$}
\end{pspicture}$

\vspace{1cm}
\caption{Morphisms in $\Hom_{\mathcal{C}}(X^{\otimes 2}_{1},X^{\otimes 2}_{1})$. Morphisms $ c^{+}_{X_1,X_1}$, $ c^{-}_{X_1,X_1}$, $f_{\mathbf{1}}$, $\Id_{X^{\otimes 2}_{1}}$ correspond to $L_{+}$, $L_{-}$, $L_{\infty}$, $L_{0}$, respectively. }\label{basis_morphisms}
\end{figure}

\begin{rmk}
\begin{enumerate}
\item The Kauffman polynomial invariant $F$ is denoted as $Q_{\nu}$ in \cite{Tu} for $\nu \in \{1, -1\}$. If we normalize invariant $T_S$ in Theorem \ref{main_5} as $\hat{T}_S=\frac{1}{4}T_S$, it is multiplicative on a disjoint union of links and has value $1$ on the trivial knot. Indeed the invariants $\hat{T}_S$ are specializations of $Q_{-}$ with $y=\pm 2i\sin(\pi/N)$.
\item One can prove the equality (\ref{fr_equality}) from the minimal polynomial $R^3+e^{-\pi i/N}R^2-e^{2\pi i/N}R-e^{\pi i/N}\Id=0$ where $R=-R_{+1}(N)$.

\item We may consider more generally $R_{\nu}(\theta)$ using $\cos \theta$ and $\sin \theta$ in the definition of $R_{\nu}(N)$ for any value $0 \leq \theta < 2\pi$. It is easy to show that $R_{\nu}(\theta)$ is a $(2,3,1)$-gYB-operator as well. We expect that Theorem \ref{main_5} remains true for any $\theta$. For $\pm R_{\nu}(\theta)$, the factor on the right hand side would be $\mp 2i\sin(\theta)$ for both of $\nu \in \{1, -1\}$.

\end{enumerate}
\end{rmk}

\end{document}